\newtheorem{theorem}{Theorem}
\newtheorem{theo}{Theorem}
\newtheorem{lemma}[theo]{Lemma}
\newtheorem{remark}[theo]{Remark}
\newenvironment{proof}{\par \noindent \textbf{Proof: }}{\QED \par \bigskip \par}
\newcommand{\QED}{\hfill$\square$}
\begin{document}

\baselineskip=0.30in

\vspace*{10mm}

\begin{center}
{\Large \bf \boldmath Minimum Atom-Bond Sum-Connectivity\\[2mm]
Index of Trees With a Fixed Order and/or\\[2mm]
Number of Pendent Vertices}

\vspace{10mm}

{\large \bf Tariq A. Alraqad$^{1}$, Igor \v{Z}. Milovanovi\'c$^{2}$, Hicham Saber$^{1}$,\\ Akbar Ali$^{1}$, Jaya P. Mazorodze$^{3}$, Adel A. Attiya$^{1}$}

\vspace{6mm}

\baselineskip=0.23in

$^1${\it Department of Mathematics, Faculty of Science,\\ University of Ha\!'il, Ha\!'il, Saudi Arabia}\\
{\tt t.alraqad@uoh.edu.sa, hicham.saber7@gmail.com, akbarali.maths@gmail.com, a.attiya@uoh.edu.sa}\\[3mm]
$^2${\it Faculty of Electronic Engineering,\\ University of Ni\v{s}, Ni\v{s}, Serbia}\\
{\tt igor.milovanovic@elfak.ni.ac.rs}\\[3mm]
$^3${\it Department of Mathematics,\\ University of Zimbabwe, Harare, Zimbabwe}\\
{\tt mazorodzejaya@gmail.com }

\makeatletter

\def\@makefnmark{}

\makeatother


\vspace{4mm}

\baselineskip=0.23in

{\bf Abstract }
\end{center}
\noindent
Let $d_u$ be the degree of a vertex $u$ of a graph $G$. The atom-bond sum-connectivity (ABS) index of a graph $G$ is the sum of the numbers $(1-2(d_v+d_w)^{-1})^{1/2}$ over all edges $vw$ of $G$. This paper gives the characterization of the graph possessing the minimum ABS index in the class of all trees of a fixed number of pendent vertices;
the star is the unique extremal graph in the mentioned class of graphs. The problem of determining graphs possessing the minimum ABS index in the class of all trees with $n$ vertices and $p$ pendent vertices is also addressed; such extremal trees have the maximum degree $3$ when $n\ge 3p-2\ge7$, and the balanced double star is the unique such extremal tree for the case $p=n-2$.\\[3mm]
{\bf Keywords:} topological index; atom-bond sum-connectivity; tree.\\[3mm]
{\bf AMS Subject Classification:} 05C07, 05C90.

\baselineskip=0.35in

\section{Introduction}

A property of a graph that is preserved by isomorphism is known as a graph invariant \cite{Gross-05}. The order and degree sequence of a graph are examples of graph invariants. The graph invariants that assume only numerical values are usually referred to as topological indices in chemical graph theory \cite{Wagner-18}.

For evaluating the extent of branching of the carbon-atom skeleton of saturated hydrocarbons, Randi\'c \cite{Randic-75} devised a topological index and called it as the branching index, which nowadays is known as the connectivity index (also, the Randi\'c index).


 The connectivity index of a graph $G$ is the following number:
\[
\sum_{vw\in E(G)} \frac{1}{\sqrt{d_vd_w}}\,,
\]
where $d_v$ and $d_w$ denote the degrees of the vertices $v$ and $w$ of $G$  respectively, and $E(G)$ denotes the set of edges of a graph $G$. It is believed that the connectivity index is the most-studied topological index (in both theoretical and applied aspects) \cite{Gutman-13}. Detail about the study of the connectivity index can be found in the survey papers  \cite{Li-MATCH-08,Randic-01}, books \cite{new1,new3}, and related papers cited therein.

Because of the success of the connectivity index, many modified versions of this index have been introduced in the literature. The atom-bond connectivity (ABC) index \cite{Estrada-98,Estrada-08} and the sum-connectivity (SC) index \cite{Zhou-JMC-09} are among the well-studied modified versions of the connectivity index. The ABC and SC indices of a graph are defined as
\[
ABC(G)=\sum_{vw\in E(G)} \sqrt{\frac{d_v+d_w-2}{d_vd_w}}\,,
\]
and
\[
SC(G)=\sum_{vw\in E(G)} \frac{1}{\sqrt{d_v+d_w}}.
\]
The readers interested in detail about the ABC and SC indices are referred to the survey papers \cite{Ali-DML-21} and \cite{Ali-MATCH-19}, respectively.

Using the main idea of the SC index, a modified version of the ABC index was proposed in \cite{ABS-JMC} recently and it was referred to as the atom-bond sum-connectivity (ABS) index. The ABS index of a graph $G$ is defined as
$$
ABS(G)=\sum_{uv\in E(G)} \sqrt{\frac{d_u+d_v-2}{d_u+d_v}}
= \sum_{uv\in E(G)}\sqrt{1-\frac{2}{d_u+d_v}}\,.
$$
Although the ABS index is a special case of a general topological index considered in \cite{Tang-DAM-16}, no result reported in \cite{Tang-DAM-16} covers the ABS index.
The graphs possessing the maximum/minimum ABS index in the class of all (i) (molecular) trees (ii) general graphs, with a given order, were characterized in \cite{ABS-JMC}. Analogous results for unicyclic graphs were reported in \cite{ABS-EJM}, where the chemical applicability of the ABS index was also investigated.

A vertex of degree one in a tree $T$ is called a pendent vertex and a vertex of degree at least three in $T$ is called a branching vertex. A path $P$ in a tree $T$ connecting a branching vertex and a pendent vertex is called a pendent path provided that every other vertex (if exists) of $P$ has degree two in $T$. A path $P$ in a tree $T$ is said to be an internal path if it connects two branching vertices and every other vertex (if exists) of $P$ has degree two in $T$. A tree with one non-pendent vertex is called a star. A double star is a tree with exactly two non-pendent vertices. A double star tree with non-pendent vertices $u$ and $v$ is called balanced if $|d_u-d_v|\leq 1$. For a general reference on graph theory see \cite{Bondy08}.

Ali et. al. \cite{ABS-EJM} posed a problem asking to determine trees possessing the minimum value of the ABS index among all trees with a fixed number of pendent vertices. The main goal of the present paper is to determine trees possessing the minimum value of the $ABS$ index in two classes of trees.  For positive integers $n$ and $p$, $\Gamma_p$ denotes the class of all trees with $p$ pendent vertices and $\Gamma_{n,p}$ denotes the class of all trees of order $n$ and $p$ pendent vertices. In section \ref{sec1} we give a complete solution to the problem posed by Ali et. al. \cite{ABS-EJM}, where we show that the star graph $S_{p+1}$ uniquely attains the minimum value of the $ABS$ index in the class $\Gamma_p$. In section \ref{sec2} we provide results on trees that minimize the value of the $ABS$ index in $\Gamma_{n,p}$.

\section{Trees with a Fixed Number of Pendent Vertices}\label{sec1}

We will need the next  already known result.

\begin{lemma}\label{lem-AZI+e} \cite[Corollary 8]{ABS-JMC}
Let $u$ and $v$ be non-adjacent vertices in a connected graph $G$. Then $ABS(G + uv) > ABS(G)$, where $G+uv$ is the graph obtained from $G$ by adding the edge $uv$.
\end{lemma}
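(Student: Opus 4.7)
The plan is to write $ABS(G+uv) - ABS(G)$ as a sum of per-edge changes and argue that each change is non-negative, with at least one strictly positive. The analytic input is that the function $f(t) = \sqrt{1 - 2/t}$ is strictly increasing on $[2, \infty)$, since $f'(t) > 0$ for $t > 2$ and $f(2) = 0$; this is exactly what makes the ABS weight of an edge sensitive to the sum of its endpoint degrees.

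Next I would partition $E(G+uv)$ into three classes: (i) edges of $G$ not incident to $u$ or $v$; (ii) edges of $G$ incident to exactly one of $u,v$ (note that an edge of $G$ having both endpoints in $\{u,v\}$ is impossible, because $uv \notin E(G)$); and (iii) the new edge $uv$ itself. Class (i) contributes identically to $ABS(G)$ and $ABS(G+uv)$, since the relevant endpoint degrees are unchanged. For each class (ii) edge $xy$ with $x \in \{u,v\}$ and $y \notin \{u,v\}$, the endpoint degree sum strictly increases by exactly $1$ from $G$ to $G+uv$; since this sum is already $\geq 2$ in $G$ (every endpoint of an edge has degree at least $1$), the monotonicity of $f$ makes each such contribution strictly larger. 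Class (iii) contributes $f(d_u + d_v + 2)$, with $d_u, d_v$ taken in $G$; because $G$ is connected and $u \ne v$, both $d_u$ and $d_v$ are at least $1$, so this single term is already strictly positive.

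Summing over the three classes then gives $ABS(G+uv) > ABS(G)$. I do not foresee a substantive obstacle: the class (iii) term alone already delivers a strictly positive surplus, and the class (ii) analysis only reinforces the inequality. The one routine point worth double-checking is strict monotonicity of $f$ across the boundary case $t=2 \to t=3$ (where $f$ jumps from $0$ to a positive value), which is immediate from the explicit formula.
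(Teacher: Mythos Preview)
Your argument is correct. The paper does not actually prove this lemma; it is quoted verbatim as a known result from \cite{ABS-JMC} (Corollary~8 there) and used as a black box. Your direct computation---splitting $E(G+uv)$ into edges untouched by the degree change, edges incident to exactly one of $u,v$, and the new edge $uv$, then invoking the strict monotonicity of $t\mapsto\sqrt{1-2/t}$ on $[2,\infty)$---is precisely the natural proof of this fact and is essentially how it is established in the cited source. One cosmetic remark: the class~(iii) term alone already forces strict inequality, so you need not worry about whether class~(ii) is nonempty.
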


\begin{lemma}\label{lem-1}
Let $p\geq2$ be an integer. If $T^{\ast}$ is a tree attaining the minimum value of the $ABS$ index in the class $\Gamma_p$, then $T^{\ast}$ has no vertex of degree $2$.
\end{lemma}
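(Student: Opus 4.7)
The plan is to argue by contradiction via a local smoothing operation. Suppose $T^{\ast}$ has a vertex $v$ with $d_v=2$, and denote its two neighbors by $u$ and $w$ with $d_u=a$ and $d_w=b$. I would consider the tree $T'$ obtained by \emph{suppressing} $v$, i.e.\ deleting $v$ (together with the two edges $uv$ and $vw$) and inserting the new edge $uw$. Because removing a degree-two vertex from a tree separates it into exactly two components, one containing $u$ and one containing $w$, reinserting $uw$ reconnects them, so $T'$ is again a tree.

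Next I would check that $T' \in \Gamma_p$, i.e.\ that $T'$ has the same number of pendent vertices as $T^{\ast}$. In $T'$ the degrees of $u$ and $w$ are still $a$ and $b$ respectively, every vertex other than $u,v,w$ is unaffected, and $v$ itself was not pendent; so the sets of degree-one vertices in $T^{\ast}$ and in $T'$ coincide.

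Finally I would compare the two indices. Only the edges $uv$ and $vw$ of $T^{\ast}$ are lost and only the edge $uw$ of $T'$ is new, with no change to any other edge-endpoint degrees. Hence
\begin{equation*}
ABS(T^{\ast})-ABS(T') \;=\; \sqrt{\tfrac{a}{a+2}}+\sqrt{\tfrac{b}{b+2}}-\sqrt{\tfrac{a+b-2}{a+b}}.
\end{equation*}
Using $a,b\geq 1$, each of the first two summands is at least $\sqrt{1/3}$, so their sum exceeds $2/\sqrt{3}>1$, while the last term is strictly less than $1$. Hence $ABS(T')<ABS(T^{\ast})$, contradicting the minimality of $T^{\ast}$. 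There is no serious obstacle here: the only point needing care is verifying that $T'$ is a tree belonging to $\Gamma_p$, and the key numerical inequality is so loose (roughly $2/\sqrt{3}>1$) that no case distinction on $a,b$ is needed and even Lemma~\ref{lem-AZI+e} is not required.
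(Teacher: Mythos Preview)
Your argument is correct and follows essentially the same approach as the paper: suppress a degree-$2$ vertex $v$ by deleting it and joining its two neighbors, then compare the ABS values edge by edge. The only difference is cosmetic---the paper simply asserts the resulting difference is positive, whereas you supply the explicit bound $2/\sqrt{3}>1$ (and you are a bit more careful in checking that $T'\in\Gamma_p$).
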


\begin{figure}[!ht]
 \centering
  \includegraphics[width=0.45\textwidth]{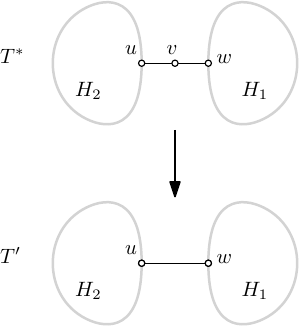}
   \caption{The graph transformation used in the proof of Lemma \ref{lem-1}. The subtree $H_1$ may or may not consist of only one vertex $w$.}
    \label{Fig-1}
     \end{figure}

\begin{proof}
Suppose to the contrary that $T^{\ast}$ has at least one vertex of degree $2$. Take $v\in V(T^{\ast})$ such that $N(v)=\{u,w\}$ and $d_u\ge d_w\ge1$. Let $T'$ be the tree formed by removing the vertex $v$ (and its incident edges) and adding the edge $uw$ (see Figure \ref{Fig-1}). In what follows, by $d_x$ we denote the degree of a vertex $x$ in $T^{\ast}$. Using the definition of the ABS index, we have
\[
 ABS(T^{\ast})-ABS(T')=\sqrt{1-\frac{2}{d_u+2}}+\sqrt{1-\frac{2}{d_w+2}}-\sqrt{1-\frac{2}{d_u+d_{w}}} ~ >0,
\]
a contradiction to the assumption that $T^{\ast}$ attains the minimum value of the  ABS index among all trees with $p$ pendent vertices.

\end{proof}

\begin{lemma}\label{lem-2}
The function $f$ defined by
\begin{align*}
f(x,y)&=(x-1)\left( \sqrt{1-\frac{2}{x+1}}-\sqrt{1-\frac{2}{x+y-1}\,}\, \right)\nonumber\\[2mm]
 &\quad\ +(y-1)\left( \sqrt{1-\frac{2}{y+1}}-\sqrt{1-\frac{2}{x+y-1}\,}\, \right)\nonumber\\[2mm]
 &\quad\ +\sqrt{1-\frac{2}{x+y}}
\end{align*}
with $x\ge y\ge3$, is strictly decreasing on $y$.
\end{lemma}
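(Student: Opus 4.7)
My approach is to compute $\partial f/\partial y$ directly and show it is strictly negative on the domain $x \geq y \geq 3$. Writing $g(t) := \sqrt{1 - 2/t}$ for $t > 2$, I have $g'(t) = \bigl(t^{3/2}\sqrt{t-2}\bigr)^{-1} > 0$ and $g''(t) = -(2t-3)\bigl(t^{5/2}(t-2)^{3/2}\bigr)^{-1} < 0$, so $g$ is strictly increasing and concave, while $g'$ is strictly decreasing. Rewriting
\[
f(x,y) = (x-1)g(x+1) + (y-1)g(y+1) - (x+y-2)g(x+y-1) + g(x+y)
\]
and differentiating in $y$ gives
\[
\frac{\partial f}{\partial y} = \bigl[g(y+1) + (y-1)g'(y+1)\bigr] - \bigl[g(x+y-1) + (x+y-2)g'(x+y-1)\bigr] + g'(x+y).
\]

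The crucial step is to introduce the auxiliary function $F(t) := g(t) + (t-2)g'(t)$, which a short calculation simplifies to
\[
F(t) = \frac{(t+1)\sqrt{t-2}}{t^{3/2}}.
\]
Since $-g(x+y-1) - (x+y-2)g'(x+y-1) = -F(x+y-1) - g'(x+y-1)$, the derivative regroups as
\[
\frac{\partial f}{\partial y} = \bigl[F(y+1) - F(x+y-1)\bigr] + \bigl[g'(x+y) - g'(x+y-1)\bigr],
\]
and it then suffices to show that both bracketed differences are strictly negative.

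The second bracket is negative because $g'$ is strictly decreasing. For the first, I will prove that $F$ is strictly increasing on $(2,\infty)$ via logarithmic differentiation:
\[
\frac{F'(t)}{F(t)} = \frac{1}{t+1} + \frac{1}{2(t-2)} - \frac{3}{2t}.
\]
Putting this over the common denominator $2t(t+1)(t-2)$, the numerator $2t(t-2) + t(t+1) - 3(t+1)(t-2)$ collapses to the constant $6$, so $F'(t) > 0$ for $t > 2$. Since $y \geq 3$ gives $y+1 \geq 4 > 2$ and $x \geq 3$ gives $y+1 < x+y-1$, I conclude $F(y+1) < F(x+y-1)$, finishing the proof.

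The main obstacle is spotting this regrouping: the raw five-term expression for $\partial f/\partial y$ has no evident sign, and the proof only goes through because the combination $g(t) + (t-2)g'(t)$ has a clean closed form whose logarithmic derivative reduces to a rational function with the constant numerator $6$. Everything else is routine calculus.
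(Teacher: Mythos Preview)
Your proof is correct and follows essentially the same route as the paper: both compute $\partial f/\partial y$, bound the term $g(y+1)+(y-1)g'(y+1)$ (your $F(y+1)$, the paper's auxiliary ``$g(y)$'') by its value at $x+y-1$ using monotonicity, and then reduce the remainder to $g'(x+y)-g'(x+y-1)<0$. Your version is a bit more complete in that you actually prove $F$ is increasing via the logarithmic derivative, whereas the paper simply asserts this.
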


\begin{proof}
We have
\begin{align}\label{Eq-lem-2-1}
\frac{\partial f}{\partial y} &= \frac{1}{(x+y)^{3/2}\sqrt{x+y-2}} - \frac{x+y-2}{(x+y-1)^{3/2}\sqrt{x+y-3}}  \nonumber\\[3mm]
&\quad \ -\sqrt{\frac{x+y-3}{x+y-1}} + g(y),
\end{align}
where
\[
g(y)=\frac{y+2}{y+1}\sqrt{\frac{y-1}{y+1}}\,.
\]
Certainly, the function $g$ is strictly increasing on $y$ because $y\ge3$. Hence, $g(y) < g(x+y-2)$ as $x\ge3$. Consequently, Equation \eqref{Eq-lem-2-1} gives
\begin{align}\label{Eq-lem-2-2}
\frac{\partial f}{\partial y} &< \frac{1}{(x+y)^{3/2}\sqrt{x+y-2}} - \frac{x+y-2}{(x+y-1)^{3/2}\sqrt{x+y-3}}  \nonumber\\[3mm]
&\quad \ -\sqrt{\frac{x+y-3}{x+y-1}} + \frac{x+y}{x+y-1}\sqrt{\frac{x+y-3}{x+y-1}}\nonumber\\[3mm]
&= \frac{1}{(x+y)^{3/2}\sqrt{x+y-2}} - \frac{1}{(x+y-1)^{3/2}\sqrt{x+y-3}}\,.
\end{align}
Since the function $\psi$ defined by
\[
\psi(t) = \frac{1}{t^{3/2}\sqrt{t-2}}
\]
is strictly decreasing for $t>2$, from \eqref{Eq-lem-2-2} it follows that $\frac{\partial f}{\partial y}<0$.

\end{proof}

\begin{lemma}\label{p1}
Let $$f(x,y)=\sqrt{1-\dfrac{2}{x+y}}\,,$$
and define the function $\psi(x,y;s)$ as $$\psi(x,y;s)=f(x+s,y)-f(x,y),$$ where $x,y\geq 1$ and $s>0$. Then $\psi(x,y;s)$ is strictly decreasing on $x$ and on $y$.
\end{lemma}
\begin{proof}
Since $$\psi(x,y;s)=f(x+s,y)-f(x,y)=f(x,y+s)-f(x,y),$$ it suffices to show the case of $x$. Note that the first partial derivatives of $f$ are calculated as
$$\frac{\partial f}{\partial x}(x,y)=\frac{\partial f}{\partial y}(x,y)=(x+y-2)^{-\frac{1}{2}}(x+y)^{-\frac{3}{2}},$$
which are both strictly decreasing in $x$. This implies that
\[\frac{\partial }{\partial x}\psi(x,y;s)=\frac{\partial f}{\partial x}(x+s,y)-\frac{\partial f}{\partial x}(x,y)<0\] and
\[\frac{\partial }{\partial y}\psi(x,y;s)=\frac{\partial f}{\partial y}(x+s,y)-\frac{\partial f}{\partial y}(x,y)<0.\]
Thus $\psi(x,y;s)$ is strictly decreasing on $x$ and on $y$.
\end{proof}

\begin{theorem}\label{thm-1}
Let $p\geq 2$ be an integer. Then for every $T\in \Gamma_p$, $$ABS(T)\geq p\sqrt{\frac{p-1}{p+1}},$$ with equality holds if and only if $T\cong S_{p+1}$.
\end{theorem}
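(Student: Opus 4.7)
The plan is to induct on the order $n$ of $T \in \Gamma_p$. For the base $n = p+1$, the only option is $T = S_{p+1}$, all of whose edges join a pendent to the center of degree $p$, so $ABS(S_{p+1}) = p\sqrt{(p-1)/(p+1)}$ by direct computation. For the inductive step, suppose $n > p+1$, so $T \neq S_{p+1}$. If $T$ has a vertex of degree $2$, the transformation used inside the proof of Lemma \ref{lem-1} produces a tree $T' \in \Gamma_p$ with $n-1$ vertices and $ABS(T') < ABS(T)$, and the induction hypothesis on $T'$ finishes this subcase. Otherwise every internal vertex of $T$ has degree at least $3$, and the subtree $T_I$ induced by the internal vertices contains at least two vertices. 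Pick a leaf $u$ of $T_I$ and let $v$ be its unique internal neighbor, so $u$ has $x - 1$ pendent neighbors, where $x = d_u \geq 3$ and $y = d_v \geq 3$. Form $T' \in \Gamma_p$ by deleting $u$ together with its pendent neighbors and attaching $x-1$ fresh pendents to $v$; then $|V(T')| = n - 1$.

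Writing $\phi(t) = \sqrt{1 - 2/t}$, an edge-by-edge comparison gives
\[
ABS(T) - ABS(T') = (x-1)\bigl[\phi(x+1) - \phi(x+y-1)\bigr] + \phi(x+y) + \sum_{w}\bigl[\phi(y + d_w) - \phi(x+y-2+d_w)\bigr],
\]
where $w$ ranges over the $y-1$ neighbors of $v$ other than $u$, with $d_w$ the degree of $w$ in $T$. Since $\phi$ is concave and strictly increasing, the function $s \mapsto \phi(s) - \phi(s + x - 2)$ is non-decreasing, so each summand attains its minimum at $d_w = 1$; substituting $d_w = 1$ throughout yields $ABS(T) - ABS(T') \geq f(x, y)$, with $f$ as in Lemma \ref{lem-2}.

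It therefore suffices to show $f(x, y) > 0$ for all integers $x, y \geq 3$. Since $f(x, y) = f(y, x)$, Lemma \ref{lem-2} reduces the task to proving $f(m, m) > 0$ for every $m \geq 3$. Rationalizing $\phi(m+1) - \phi(2m-1)$ expresses
\[
f(m, m) = \phi(2m) - \frac{4(m-1)(m-2)}{(m+1)(2m-1)\bigl(\phi(m+1) + \phi(2m-1)\bigr)},
\]
and the crude bound $\phi(m+1) + \phi(2m-1) > 2\phi(m+1)$ then reduces the desired positivity to $(2m-1)^2(m+1) \geq 4m(m-2)^2$, equivalently $16m^2 - 19m + 1 \geq 0$, which is clear for all $m \geq 2$. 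Combined with the induction hypothesis on $T'$ this gives $ABS(T) > ABS(T') \geq p\sqrt{(p-1)/(p+1)}$ and completes the induction; the strictness of every intermediate inequality whenever $T \neq S_{p+1}$ also yields the claimed uniqueness. The main obstacle is organizing the transformation in the no-degree-$2$ case so that the corrective sum over the other neighbors of $v$ has a favorable sign (which is where the concavity of $\phi$ enters), and then packaging Lemma \ref{lem-2} with the one-variable polynomial inequality above to secure $f > 0$.
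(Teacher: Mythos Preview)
Your argument is correct and follows essentially the same route as the paper: the degree-$2$ case is disposed of via the contraction from Lemma~\ref{lem-1}, and in the no-degree-$2$ case you contract an edge between two adjacent internal vertices, use concavity of $\phi$ to push all the neighbor degrees down to $1$, and arrive at the same two-variable function $f(x,y)$ of Lemma~\ref{lem-2}. Your packaging as an induction on $n$ is equivalent to the paper's extremal framing, and your choice of $u$ as a leaf of the internal subtree is a harmless specialization of the paper's ``any two adjacent vertices of degree $\ge 3$''.

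The one genuine difference is the endgame showing $f(m,m)>0$. The paper argues that the one-variable function is strictly decreasing with limit $0$ at infinity (invoking a series expansion to identify the limit). Your rationalization of $\phi(m+1)-\phi(2m-1)$ together with the crude bound $\phi(m+1)+\phi(2m-1)>2\phi(m+1)$ reduces everything to the explicit polynomial inequality $(2m-1)^2(m+1)\ge 4m(m-2)^2$, i.e.\ $16m^2-19m+1\ge 0$. This is more elementary and fully self-contained, avoiding any appeal to asymptotics; it is a cleaner way to close the argument than the paper's.
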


\begin{figure}[!ht]
 \centering
  \includegraphics[width=0.5\textwidth]{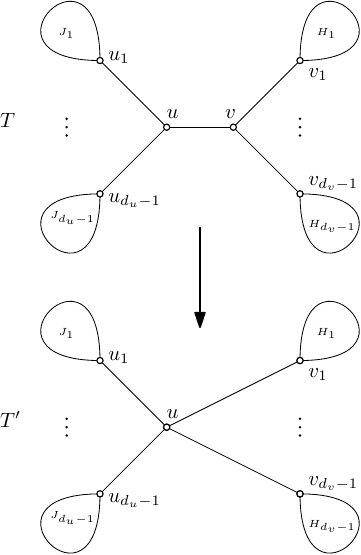}
   \caption{The graph transformation used in the proof of Theorem \ref{thm-1}. For $i\in \{1,\ldots,d_u-1\}$ and $j\in \{1,\ldots,d_v-1\}$, the subtree $J_i$ may or may not consist of only one vertex $u_i$ and the subtree $H_j$ may or may not consist of only one vertex $v_j$, respectively.}
    \label{Fig-2}
     \end{figure}

\begin{proof}
Let $T$ be a graph attaining the minimum $ABS$ value among all trees with $p$ pendent vertices. By Lemma \ref{lem-1}, $T$ has no vertex of degree $2$. We claim that $T$ has only one vertex of degree greater than $2$. Contrarily, we assume that $T$ contains at least two vertices of degree greater than $2$. Among the vertices of degrees at least $3$, we pick $u,v\in V(T)$ such that $uv\in E(G)$ (Lemma \ref{lem-1} guarantees the existence of the vertices $u$ and $v$ when $T$ has at least one pair of vertices of degrees greater than $2$). Without loss of generality, we suppose that $d_u\ge d_v$. Let $v_1,\cdots,v_{d_v-1}$ be the neighbors of $v$ different from $u$. Construct a new tree $T'$ by dropping the vertex $v$ (and its incident edges) and inserting the edges $v_1u,\cdots,v_{d_v-1}u$. Certainly, both the trees $T$ and $T'$ have the same number of pendent vertices. However, in the following, we show that $ABS(T)>ABS(T')$, which gives a contradiction to the minimality of $ABS(T)$ and hence $T$ must contain exactly one vertex of degree greater than $2$, as desired.

In what follows, by $d_w$ we denote the degree of a vertex $w$ in $T$. If $u_1,\cdots,u_{d_u-1}$ are the neighbors of $u$ different from $v$, then
\newpage
\begin{align}\label{Eq-1}
 ABS(T)-ABS(T')&=\sum_{i=1}^{d_u-1}\left( \sqrt{1-\frac{2}{d_u+d_{u_i}}}-\sqrt{1-\frac{2}{d_u+d_v+d_{u_i}-2}\,}\, \right)\nonumber\\[2mm]
 &\quad\ +\sum_{j=1}^{d_v-1}\left( \sqrt{1-\frac{2}{d_v+d_{v_j}}}-\sqrt{1-\frac{2}{d_u+d_v+d_{v_j}-2}\,}\, \right)\nonumber\\[2mm]
 &\quad\ +\sqrt{1-\frac{2}{d_u+d_{v}}}\, .
\end{align}
By Lemma \ref{p1}, the following inequalities hold for all $i=1,...,d_u-1$ and $j=1,...,d_v-1$:
$$\sqrt{1-\frac{2}{d_u+d_{u_i}}}-\sqrt{1-\frac{2}{d_u+d_v+d_{u_i}-2}\,}\geq
\sqrt{1-\frac{2}{d_u+1}}-\sqrt{1-\frac{2}{d_u+d_v-1}\,}$$
and
$$\sqrt{1-\frac{2}{d_v+d_{v_j}}}-\sqrt{1-\frac{2}{d_u+d_v+d_{v_j}-2}\,}\geq
\sqrt{1-\frac{2}{d_v+1}}-\sqrt{1-\frac{2}{d_u+d_v-1}\,}\,.$$
Thus, Equation \eqref{Eq-1} yields
\begin{align}\label{Eq-2}
 ABS(T)-ABS(T')&\ge(d_u-1)\left( \sqrt{1-\frac{2}{d_u+1}}-\sqrt{1-\frac{2}{d_u+d_v-1}\,}\, \right)\nonumber\\[2mm]
 &\quad\ +(d_v-1)\left( \sqrt{1-\frac{2}{d_v+1}}-\sqrt{1-\frac{2}{d_u+d_v-1}\,}\, \right)\nonumber\\[2mm]
 &\quad\ +\sqrt{1-\frac{2}{d_u+d_{v}}}\, .
\end{align}
Since $d_u\ge d_v$, by Lemma \ref{lem-2}, Inequality \eqref{Eq-2} gives
\begin{align}\label{Eq-3}
  ABS(T)-ABS(T')&\ge2(d_u-1)\left( \sqrt{1-\frac{2}{d_u+1}}-\sqrt{1-\frac{2}{2d_u-1}\,}\, \right)\nonumber\\[3mm]
  &\quad \ +\sqrt{1-\frac{1}{d_u}}\,.
\end{align}
By Lemma \ref{lem-2} the function $g(s)$ defined by
\[
g(s) = f(s,s)=2(s-1)\left( \sqrt{1-\frac{2}{s+1}}-\sqrt{1-\frac{2}{2s-1}\,}\, \right) +\sqrt{1-\frac{1}{s}}\,,
\]
with $s\ge3$, is strictly decreasing, and
\[
\lim_{s\rightarrow \infty }g(s)=\lim_{s\rightarrow \infty} \left[\frac{-4s^2+12s-8}{\sqrt{2s^2+s-1}\left(\sqrt{2s^2-3s+1}+\sqrt{2s^2-s-3}\right)}+\sqrt{1-\frac{1}{s}}\right]=0.
\]
Therefore, the right-hand side of \eqref{Eq-3} is positive for $d_u\ge3$. This completes the proof.

\end{proof}

\section{Trees With a Fixed Order and Pendent Vertices}\label{sec2}

In this section, we characterize  trees attaining the minimum value of the ABS index in $\Gamma_{n,p}$, where $p\geq3$ and $n\geq3p-2$.

\begin{lemma}\label{l1-new}
If $y$ is a fixed real number greater than or equal to $3$ then the function $f$, defined in Lemma \ref{p1}, is strictly increasing in $x$.
\end{lemma}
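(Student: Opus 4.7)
The plan is to read the result directly off the partial-derivative calculation already carried out inside the proof of Lemma~\ref{p1}. First I would recall from that proof the identity
\[
\frac{\partial f}{\partial x}(x,y) = (x+y-2)^{-1/2}(x+y)^{-3/2},
\]
obtained by direct differentiation of $f(x,y)=\sqrt{1-2/(x+y)}$. Both factors on the right are real and strictly positive whenever $x+y>2$, so $\partial f/\partial x>0$ on that open region.

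The next step is to invoke the hypothesis $y\geq 3$ together with the implicit domain constraint $x\geq 1$ inherited from Lemma~\ref{p1} (where $\min\{x,y\}\geq 1$), which gives $x+y\geq 4>2$. This guarantees $\partial f/\partial x>0$ throughout the range of interest, and strict positivity of the partial derivative immediately yields that $f$ is strictly increasing in $x$ for each fixed $y\geq 3$.

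There is essentially no hard step here: the lemma is a one-line corollary of the derivative formula already exhibited in the proof of Lemma~\ref{p1}, and the only verification needed is the sign check $x+y>2$, which follows trivially from $y\geq 3$. The one thing worth flagging is that the hypothesis $y\geq 3$ is stronger than what the monotonicity actually requires (any $y$ keeping $x+y>2$ on the domain would do); it is presumably chosen to align with the degree constraints that will appear in the tree-theoretic arguments of Section~\ref{sec2}, where $y$ represents the degree of a non-pendent vertex.
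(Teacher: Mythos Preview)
Your argument is correct and is exactly the intended one: the paper states Lemma~\ref{l1-new} without proof, evidently regarding it as an immediate consequence of the partial-derivative formula displayed in the proof of Lemma~\ref{p1}, and your verification that $\partial f/\partial x>0$ whenever $x+y>2$ (in particular whenever $y\ge 3$ and $x\ge 1$) fills in precisely that omitted step.
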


For a tree $T$, denote by $W_1(T)$ the set of pendent vertices of $T$, $W_2(T)=\cup_{v\in W_1(T)}N(v)$, and $W_3(T)=V(T)\setminus (W_1(T)\cup W_2(T))$. A vertex in $T$ of degree at least three is called a branching vertex. A path is called an internal path, if its end vertices are branching vertices and every other vertex has degree two. A path is called a pendent path if one of the end vertices is pendent and every other vertex has degree two.

\begin{lemma}\label{p2}
Let $T^{\ast}\in \Gamma_{n,p}$ attaining minimum $ABS$ value. Then every internal path of $T^{\ast}$ has length one.
\end{lemma}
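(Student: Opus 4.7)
The plan is to argue by contradiction. Suppose $T^{\ast}\in\Gamma_{n,p}$ attains the minimum $ABS$ value but contains an internal path $P:\ u=v_0,v_1,\ldots,v_{\ell_P}=v$ of length $\ell_P\geq 2$, so that $d_u,d_v\geq 3$ and each interior $v_i$ has degree two. Since $p\geq 3$, the tree also contains some pendent path $P':\ b=w_0,w_1,\ldots,w_{\ell_{P'}}=w$ with $b$ branching (possibly $b\in\{u,v\}$), $w$ pendent, and $\ell_{P'}\geq 1$. I would construct $T'$ by deleting the $\ell_P-1$ interior vertices of $P$, adding the single edge $uv$, and reinserting those $\ell_P-1$ degree-two vertices consecutively into $P'$ on the $b$-end, so $P'$ becomes a pendent path of length $\ell_{P'}+\ell_P-1$. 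The total vertex count, the pendent count, and the degree of every branching or pendent vertex are preserved, so $T'\in\Gamma_{n,p}$.

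Using the function $f$ from Lemma \ref{p1} as the edge contribution $f(d_x,d_y)$, the interior $(2,2)$-edges cancel between the two trees and routine bookkeeping yields
\[
ABS(T^{\ast})-ABS(T')=f(d_u,2)+f(d_v,2)-f(d_u,d_v)-f(2,2)
\]
when $\ell_{P'}\geq 2$, and
\[
ABS(T^{\ast})-ABS(T')=\bigl[f(d_u,2)+f(d_v,2)-f(d_u,d_v)-f(2,2)\bigr]+\bigl[g_1(1,2)-g_1(1,d_b)\bigr]
\]
when $\ell_{P'}=1$, with $g_s$ as in Lemma \ref{p1}. Using $f(d_u,2)-f(2,2)=g_{d_u-2}(2,2)$ and $f(d_u,d_v)-f(2,d_v)=g_{d_u-2}(2,d_v)$, the first bracket in each display rewrites as $g_{d_u-2}(2,2)-g_{d_u-2}(2,d_v)$, which is strictly positive by the monotonicity of $g_s$ in its second argument (Lemma \ref{p1}), since $d_v\geq 3$. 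The second bracket in the $\ell_{P'}=1$ expression is strictly positive by the same lemma applied to $g_1$, since $d_b\geq 3$. Either way $ABS(T')<ABS(T^{\ast})$, contradicting the minimality of $T^{\ast}$.

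The main subtlety I anticipate is obtaining a strict decrease in the case $\ell_{P'}=1$ combined with $\ell_P\geq 3$: the more obvious contract-by-one/subdivide-by-one variant gives zero net change as soon as every pendent edge lies inside a longer pendent path, so one must collapse the \emph{entire} internal path at once and dump all freed degree-two vertices onto a single pendent path to achieve a genuine strict decrease. This is what forces the second bracket in the formula above to appear, and its positivity via Lemma \ref{p1} is precisely what closes the argument.
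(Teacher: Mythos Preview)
Your argument is correct and follows essentially the same approach as the paper's proof: collapse the long internal path to the single edge $uv$ and absorb the freed degree-$2$ vertices into a pendent path, then use Lemma~\ref{p1} to show the change in $ABS$ is strictly negative. The only cosmetic difference is that the paper attaches the freed chain at the \emph{pendent} end (turning the old pendent vertex into a degree-$2$ vertex and making $v_{r-1}$ the new leaf), so their difference always carries the extra term $g_1(1,d_w)-g_1(1,2)$ and their case split is on $r=2$ versus $r>2$; you insert at the \emph{branching} end and split instead on $\ell_{P'}=1$ versus $\ell_{P'}\ge 2$. Both routes reduce to the same monotonicity from Lemma~\ref{p1}.
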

\begin{proof}
For a contradiction, assume $T^{\ast}$ contains an internal path $v=v_0-v_1-v_2-..-v_r=u$ of length $r\geq 2$. Let $w\in W_2(T)$ and let $y$ be a pendent vertex adjacent to $w$.  Let $T'=T^{\ast}-\{vv_1,uv_{r-1}\}+\{vu,yv_1\}$. Clearly  $T'\in \Gamma_{n,p}$. In
what follows, by $d_x$ we denote the degree of a vertex $x$ in $T^{\ast}$.  If $r>2$, then
\begin{align}\label{eq-01A}
ABS(T')-ABS(T^{\ast})&= f(d_{v},d_u)+f(2,2)+f(d_w,2)-f(d_v,2)\nonumber\\
&-f(d_u,2)-f(2,2)+f(1,2)-f(d_w,1)\nonumber\\
&=(f(d_v,d_u)-f(d_v,2))-(f(2,d_u)-f(2,2))\nonumber\\&
+(f(d_w,2)-f(d_w,1))-(f(2,2)-f(2,1))\nonumber\\
&=\psi(2,d_v;d_u-2)-\psi(2,2;d_u-2)+\psi(1,d_w;1)-\psi(1,2;1)<0,\nonumber
\end{align}
which yields a contradiction. If $r=2$ then
\begin{align}
ABS(T')-ABS(T^{\ast})&= f(d_{v},d_u)+f(2,1)+f(d_w,2)-f(d_v,2)-f(d_u,2)-f(d_w,1)\nonumber\\
&=(f(d_v,d_u)-f(d_v,2))-(f(2,d_u)-f(2,2))\nonumber\\&
+(f(d_w,2)-f(d_w,1))-(f(2,2)-f(2,1))\nonumber\\
&=\psi(2,d_v;d_u-2)-\psi(2,2;d_u-2)+\psi(1,d_w;1)-\psi(1,2;1)<0.\nonumber
\end{align}
Again this contradicts the minimality of $ABS(T^{\ast})$. Thus $T^{\ast}$ does not have an internal path of length greater than one.
\end{proof}

\begin{lemma}\label{p3}
Let $T^{\ast}\in \Gamma_{n,p}$ attaining  minimum $ABS$ value. Then $d_u\leq d_v$ for every $u\in W_2(T^{\ast})$ and $v\in W_3(T^{\ast})$.
\end{lemma}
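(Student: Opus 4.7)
The plan is to argue by contradiction. Suppose there exist $u \in W_2(T^*)$ and $v \in W_3(T^*)$ with $a := d_u > d_v =: b$. Note that $b \geq 2$, and since $v \in W_3$ every neighbor of $v$ has degree at least $2$ (otherwise $v$ would lie in $W_2$).

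The main idea is a degree-preserving ``edge swap.'' Let $x$ be a pendent neighbor of $u$ (which exists because $u \in W_2$), and choose a neighbor $w$ of $v$ for which $u$ lies in the component of $T^* - vw$ containing $v$. Such a $w$ exists because $d_v \geq 2$: if $uv \in E(T^*)$ one may take any $w \in N(v)\setminus\{u\}$, and otherwise, letting $w^*$ denote the unique neighbor of $v$ on the $u$-$v$ path, one may take any $w \in N(v)\setminus\{w^*\}$. Define
\[
T' := T^* - \{ux,vw\} + \{vx,uw\}.
\]
The choice of $w$ makes $T'$ connected, and since $|E(T')|=|E(T^*)|$ it is a tree. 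The swap only reattaches $x$ from $u$ to $v$ and $w$ from $v$ to $u$, so every vertex retains its degree, and in particular $T' \in \Gamma_{n,p}$.

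Because all vertex degrees are preserved, the only contributions to $ABS$ that change come from the four swapped edges. With $f$ and $g_s$ as in Lemma \ref{p1} this gives
\[
ABS(T')-ABS(T^*) = \bigl[f(b,1)+f(a,d_w)\bigr]-\bigl[f(a,1)+f(b,d_w)\bigr] = g_{a-b}(b,d_w)-g_{a-b}(b,1).
\]
Since $d_w \geq 2 > 1$ and $g_{a-b}(b,\cdot)$ is strictly decreasing by Lemma \ref{p1}, the right-hand side is strictly negative, contradicting the minimality of $T^*$.

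The only delicate point in the plan is the tree-preserving choice of $w$, which is handled by the short case split above; once $T'$ is in hand the key inequality is a one-line application of Lemma \ref{p1}, and neither Lemma \ref{l1-new} nor Lemma \ref{p2} is needed for this step.
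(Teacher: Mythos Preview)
Your proof is correct and follows essentially the same strategy as the paper: both construct a comparison tree $T'\in\Gamma_{n,p}$ in which the only net change to the ABS value is that $u$ acquires a neighbor of degree $d_w\ge 2$ while $v$ acquires a pendent neighbor, and both then invoke Lemma~\ref{p1} on the identical difference $f(d_v,1)+f(d_u,d_w)-f(d_u,1)-f(d_v,d_w)$. The paper realizes $T'$ by transplanting all edges of a non-pendent neighbor $x$ of $v$ onto the pendent $y$ of $u$ (so that $x$ and $y$ exchange degrees), whereas your two-edge swap achieves the same effect more directly and with a cleaner check that $T'$ is a tree.
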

\begin{proof}
Let $y$ be a pendent vertex adjacent to $u$ and let $x$ be a non-pendent vertex adjacent to $v$ and not on the $u-v$ path. Let $T'=T^{\ast}-\{xz\mid z\in N(x)\setminus\{v\}\}+\{zy\mid z\in N(x)\setminus\{v\}\}$. In
what follows, by $d_x$ we denote the degree of a vertex $x$ in $T^{\ast}$. Clearly, $T'\in \Gamma_{n,p}$, and so $ABS(T')\geq ABS(T^{\ast})$. Thus we have,
\begin{equation*}
0\leq ABS(T')-ABS(T^{\ast})= f(d_{v},1)-f(d_{v},d_x)+f(d_u,d_x)-d(d_u,1)=\psi(1,d_u;d_x-1)-\psi1,d_v,d_x-1).
\end{equation*}
Hence $d_u\leq d_v$.
\end{proof}

\begin{lemma}\label{p4}
Let $T\in \Gamma_{n,p}$. If $n=3p-2+t$ for some $t\geq0$, then the following assertions hold.
\begin{enumerate}[label=(\roman*)]
\item\label{p4i} $d_v=2$ for all $v\in W_2(T)$
\item\label{p4ii} $\sum_{u\in W_3(T)}d_u=3p-6+2t.$
\end{enumerate}
\end{lemma}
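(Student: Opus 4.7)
I will prove part (i) by contradiction and then derive part (ii) from (i) by a short handshake computation. Assume the minimizer $T\in\Gamma_{n,p}$ has some $v\in W_2(T)$ with $d_v\geq 3$, and pick a pendent $y$ adjacent to $v$. Lemma \ref{p2} (together with $p\geq 3$) forces every degree-$2$ vertex of $T$ to lie on a \emph{pendent path}, i.e.\ a path from a branching vertex to a pendent whose interior vertices have degree $2$. I split into two cases according to the longest pendent-path length.

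\emph{Case A: some pendent path $u-z_1-z_2-\cdots-z_{r-1}-y'$ has length $r\geq 3$.} Take $w=z_1$, whose two neighbors $u$ and $z_2$ are both non-pendent, and form
\[
T'=T-\{wu,\,wz_2,\,vy\}+\{uz_2,\,vw,\,wy\}.
\]
A brief check (trace the four components produced by removing the three edges) shows that $T'$ is a tree, and every vertex has the same degree in $T'$ as in $T$; hence $T'\in\Gamma_{n,p}$. Using the symmetry $f(2,d_u)=f(d_u,2)$, the six-term ABS difference telescopes to
\[
ABS(T)-ABS(T')=[f(d_v,1)-f(d_v,2)]-[f(2,1)-f(2,2)]=g_1(1,2)-g_1(1,d_v),
\]
which is strictly positive by Lemma \ref{p1} (monotonicity of $g_1$ in its second argument) since $d_v\geq 3$. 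This contradicts the minimality of $T$.

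\emph{Case B: every pendent path of $T$ has length at most $2$.} Let $b$ be the number of branching vertices of $T$, and for each branching vertex $u$ let $k_u$ and $m_u$ count its length-$1$ and length-$2$ pendent paths. Counting pendents yields $\sum_u(k_u+m_u)=p$ and counting degree-$2$ vertices yields $\sum_u m_u=n-p-b$, so $\sum_u k_u=2p+b-n$. Since $v\in W_2(T)$ is branching we have $k_v\geq 1$, giving $1\leq 2p+b-n$. Combining with the standard tree inequality $b\leq p-2$ (from $\sum_{d\geq 3}(d-2)n_d=p-2$) forces $n\leq 3p-3$, contradicting $n\geq 3p-2$.

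Both cases being impossible, (i) is established. For (ii), the hypothesis $p\geq 3$ rules out the degenerate three-vertex path, so (i) implies that every $v\in W_2(T)$ has exactly one pendent and one non-pendent neighbor; hence $|W_2(T)|=p$, and the handshake identity yields $\sum_{u\in W_3(T)}d_u=2(n-1)-p-2p=2n-2-3p=3p-6+2t$, using $n=3p-2+t$. The main subtlety lies in Case A: choosing $w=z_1$ with both of its neighbors non-pendent is exactly what produces the cancellation $f(2,d_u)=f(d_u,2)$ that exposes a positive increment of $g_1$, and this is why the case split on pendent-path length is necessary—in Case B no analogous local swap works, so one must resort to counting.
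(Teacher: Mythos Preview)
Your argument is correct, but it proceeds quite differently from the paper's. The paper's proof of (i) is a single counting argument that hinges on Lemma~\ref{p3}: assuming some $v_0\in W_2(T)$ has $d_{v_0}\ge 3$, Lemma~\ref{p3} forces $\beta:=\min_{u\in W_3(T)}d_u\ge 3$, and then a handshake estimate of the form
\[
2(|W_2(T)|-1)+3+\beta\bigl(2p-2+t-|W_2(T)|\bigr)\le 5p-6+2t
\]
yields $\beta<3$, a contradiction. You instead bypass Lemma~\ref{p3} entirely and rely only on Lemmas~\ref{p1} and~\ref{p2}: the case split on the maximum pendent-path length lets you either exhibit an explicit ABS-decreasing swap (Case~A, where the choice $w=z_1$ with both neighbors non-pendent is exactly what produces the clean $g_1(1,2)-g_1(1,d_v)$ increment) or close by the counting bound $n\le 2p+b-1\le 3p-3$ (Case~B). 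The paper's route is shorter and avoids the case analysis; your route is more self-contained in that it does not invoke the $W_2$-vs-$W_3$ degree comparison, and your Case~B bound is arguably cleaner than the $\beta$-inequality. Both proofs tacitly assume $T$ is an ABS-minimizer (the paper's statement omits this hypothesis, but its proof uses Lemma~\ref{p3}, which applies only to minimizers), and both derive (ii) by the same handshake computation once $|W_2(T)|=p$ is known. One small point worth recording in Case~A: the vertices $u,w,z_2$ may coincide with $v$ only via $u=v$ (since $d_v\ge 3$ while $w,z_2$ have degree~$2$), and in that event the edge $vw$ is removed and re-added, but the ABS difference still reduces to $g_1(1,2)-g_1(1,d_v)>0$, so the argument goes through unchanged.
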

\begin{proof}
\ref{p4i} We have $p+\sum_{v\in W_2(T)}d_v+\sum_{u\in W_3(T)}d_u=6p-6+2t$. Seeking a contradiction, assume $d_{v_0}\geq3$ for some ${v_0}\in W_2(T)$. Let $\beta=min\{d_u\mid u\in W_3(T)\}$. Then by Lemma \ref{p3}, $\beta \geq d_{v_0}\geq 3$. On the other hand, since $d_v\geq 2$ for all $v\in W_2(T)$, we have $$p+2(|W_2(T)|-1) +3+\beta(2p-2+t-|W_2(T)|)\leq 6p-6+2t.$$ So $\beta\leq \frac{5p-7+2t-2|W_2(T)|}{2p-2+t-|W_2(T)|}=\frac{p-3}{2p-2+t-|W_2(T)|}+2<3$, a contradiction. Thus  $d_v=2$ for all $v\in W_2(T)$.

\ref{p4ii} We conclude from part \ref{p4i} that $|W_2(T)|=p$. So $p+2p+\sum_{u\in W_3(T)}d_u=6p-6+2t$, which yields $\sum_{u\in W_3(T)}d_u=3p-6+2t.$
\end{proof}

\begin{remark}\label{r1} Let $T^{\ast}\in \Gamma_{n,p}$ be a tree attaining minimum $ABS$ value. It results from Lemma \ref{p3} that every vertex of $T^{\ast}$ of degree two is on a pendent path. Assume there are two vertices $u,v\in W_3(T^{\ast})$ such that $d_u=d_v=2$ and $u$ and $v$ do not belong to the same pendent path. Since $v\in W_3(T^{\ast})$ and $d_v=2$, there are two vertices $x$ and $y$ such that $d_x\leq d_y=2$ and $v-y-x$. Let $z$ be the pendent vertex at the end of the pendent path containing $u$ and  let $T'=T^{\ast}-{xy}+{zy}$. Clearly $T'\in\Gamma_{n,p}$ and $ABS(T')=ABS(T^{\ast})$. We conclude that it is possible to obtain a tree $T_1^{\ast}\in \Gamma_{n,p}$, in which all vertices of degree two in $W_3(T_1^{\ast})$ belong to the same pendent path, such that $ABS(T_1^{\ast})=ABS(T^{\ast})$. Moreover, $T^{\ast}$ and $T_1^{\ast}$ have the same maximum degree and the subtrees of $T^{\ast}$ and $T_1^{\ast}$ induced on their  sets of branching vertices are isomorphic.
\end{remark}

\begin{theorem}\label{txd}
Let $p\geq3$ and $n\geq 3p-2$. If $T^{\ast}\in \Gamma_{n,p}$ attaining the minimum $ABS$ value, then the maximum degree of $T^{\ast}$ is $3$.
\end{theorem}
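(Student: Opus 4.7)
The plan is to argue by contradiction. Suppose some $u \in V(T^{\ast})$ has $d_u \geq 4$. By Lemma \ref{p4}(i), pendent vertices have degree $1$ and vertices of $W_2(T^{\ast})$ have degree $2$, so $u \in W_3(T^{\ast})$. Writing $n = 3p-2+t$ with $t \geq 0$, Lemma \ref{p4} gives $|W_3(T^{\ast})| = p-2+t$ and $\sum_{w \in W_3}(d_w - 2) = p-2$. Since $d_u - 2 \geq 2$, the remaining $p-3+t$ vertices of $W_3$ share excess at most $p-4$; if all of them had degree $\geq 3$ their total excess would be at least $p-3+t \geq p-3 > p-4$, a contradiction. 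Hence there exists $v \in W_3(T^{\ast})$ with $d_v = 2$. Moreover, by Lemma \ref{p2} the two neighbors of $v$ cannot both be branching, so at least one of them has degree $2$.

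I then define $T' = T^{\ast} - uu_1 + vu_1$, where $u_1$ is a neighbor of $u$ that does not lie on the unique $u$-$v$ path in $T^{\ast}$; such $u_1$ exists because $d_u \geq 4$ while only one neighbor of $u$ lies on that path. It is routine to check that $T' \in \Gamma_{n,p}$: the vertex and pendent sets are preserved, $d_u$ drops to $d_u-1 \geq 3$, $d_v$ rises from $2$ to $3$, and every other degree is unchanged. The difference $ABS(T') - ABS(T^{\ast})$ then decomposes into three parts: (a) $\sum_{j=2}^{d_u}[f(d_u - 1, d_{u_j}) - f(d_u, d_{u_j})]$ over the remaining edges at $u$, negative by Lemma \ref{l1-new}; (b) $f(3, d_{u_1}) - f(d_u, d_{u_1})$ from the relocated edge, negative since $d_u \geq 4 > 3$; and (c) $[f(3, d_{v_1}) - f(2, d_{v_1})] + [f(3, d_{v_2}) - f(2, d_{v_2})]$ at $v$, which is positive. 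The positive part (c) is bounded above by $2 g_1(2,2)$ via Lemma \ref{p1} together with the structural fact that at least one $d_{v_i} = 2$, while the magnitudes of (a) and (b) are estimated from below using $d_u \geq 4$.

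The main obstacle is making the comparison strict in every case. When $u$'s neighbors all happen to have relatively large degrees, the negative parts (a), (b) become small. To handle this I plan to choose $u$ of maximum degree in $T^{\ast}$ (so $d_{u_j} \leq d_u$) and pick $u_1$ as a smallest-degree neighbor of $u$, so that both bounds are tightened; the hypothesis $n \geq 3p-2$ enters crucially through the counting above and, in residual extremal cases, restricts the degree profile enough for the inequality to close. Should the swap still be insufficient in some corner case, I would fall back to a compound transformation that splits $u$ into two degree-$3$ vertices while simultaneously contracting the edge at a degree-$2$ vertex of $W_3$; by Lemma \ref{p2} the contracting change is exactly $-f(2,2)$ in every admissible configuration, providing an additional negative contribution that can be shown to dominate the residual positive terms.
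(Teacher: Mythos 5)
Your setup (extracting a degree-$2$ vertex of $W_3$ from the degree count of Lemma \ref{p4}, and noting via Lemma \ref{p2} that it has a neighbour of degree $2$) is sound and matches the paper. The gap is in the core step: the single edge relocation $T'=T^{\ast}-uu_1+vu_1$ does \emph{not} always decrease the ABS index, and neither of your safeguards closes the hole. Take $d_u=4$ with all four neighbours of $u$ also of degree $4$, and $v$ a degree-$2$ vertex of $W_3$ both of whose neighbours have degree $2$; this configuration is compatible with Lemmas \ref{p2}--\ref{p4} and with your choices ($u$ has maximum degree, every neighbour is a smallest-degree neighbour). Then, with $f(x,y)=\sqrt{1-2/(x+y)}$, your parts (a) and (b) together equal $4\bigl(f(3,4)-f(4,4)\bigr)\approx 4(0.8452-0.8660)=-0.0835$, while part (c) equals $2\bigl(f(3,2)-f(2,2)\bigr)\approx 2(0.7746-0.7071)=+0.1350$, so $ABS(T')-ABS(T^{\ast})\approx +0.0515>0$: the move goes the wrong way. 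The difficulty is structural: when the neighbours of the high-degree vertex are themselves branching, the decrements $g_1(d_u-1,d_{u_j})$ are too small to offset the increase at $v$. This is exactly why the paper does not operate on a maximum-degree vertex but on the degree-$\ge 4$ vertex \emph{farthest} from the degree-$2$ vertex (forcing its off-path neighbours to have degree $\le 3$), and why even then it needs four separate subcases, two of which (the on-path neighbour $w$ having $d_w\ge 4$ when $d_v=4$) require entirely different transformations, including a simultaneous multi-edge rearrangement in Subcase 2.4.

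Your proposed fallback also fails on the same configuration: splitting the degree-$4$ vertex into two adjacent degree-$3$ vertices changes the local contribution by $4\bigl(f(3,4)-f(4,4)\bigr)+f(3,3)\approx -0.0835+0.8165=+0.7330$, while suppressing a degree-$2$ vertex contributes $-f(2,2)\approx -0.7071$, for a net change of about $+0.026>0$. (As a secondary point, your three-part decomposition also breaks down when $v\in N(u)$, since the edge $uv$ then has both endpoints changing degree and is counted by both (a) and (c); the paper treats this as a separate case.) So the proposal as written cannot be completed without importing the paper's farthest-vertex selection and its case analysis, or finding some genuinely new transformation for the hard configurations.
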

\begin{proof}
For a contradiction, assume $T^{\ast}$ has a vertex of degree at least $4$. Let $\beta=min\{d_v\mid v\in W_3(T)\}$. Then, by Lemma \ref{p4} \ref{p4ii}, $4+\beta(|W_3(T^{\ast})|-1)\leq 3p-6+2t$. Consequently, $\beta\leq \frac{3p-10+2t}{p-3+t}<3$. So there is a vertex $u\in W_3(T^{\ast})$ such that $d_u=2$. We select $u$ so that one of its neighbors has degree at least three. Now let $v$ be the farthest vertex from $u$ that has degree at least $4$. Clearly, for each vertex $x\in N(v)$ that does not lie on the $u-v$ path, we have $d_x\leq 3$. In what follows, $d_a$ denotes the degree of a vertex $a$ in $T^{\ast}$.\\
Case 1. $u\in N(v)$. In this case choose $y\in N(v)\setminus\{u\}$ and let $T'=T^{\ast}-\{yv\}+\{yu\}$. Then
\begin{align} ABS(T')-ABS(T^{\ast})= &f(d_y,3)-f(d_y,d_v)+f(2,3)-f(2,2)\nonumber\\
&+f(3,d_v-1)-f(2,d_v)\nonumber\\
&+\sum_{x\in N(v)\setminus\{y,u\}}(f(d_x,d_v-1)-f(d_x,d_v))\nonumber\\
\leq &f(3,3)+f(2,3)-f(2,2)-f(3,d_v)\nonumber\\
&-(d_v-2)(f(3,d_v)-f(3,d_v-1)).\nonumber
\end{align}
Consider the function $$h_1(s)=f(3,3)+f(2,3)-f(3,s)-f(2,2)-(s-2)(f(3,s)-f(3,s-1)).$$ We have
$$h_1'(s)=\frac{A_1-B_1}{\sqrt{s(s + 1)}(s+2)^2(s+3)^2},$$

where $A_1=(s^2+3s-2)(s+3)^2\sqrt{(s+1)(s+2)}$ and $B_1=(s^2+5s+2)(s+2)^2\sqrt{s(s+3)}$. Since
$A^2_1-B_1^2=-14s^5-137s^4-456s^3-577s^2-140s+108\leq 0$,
for $s\geq 1$, we get $h_1'(s)\leq 0$ for $s\geq 1$, and thus $h_1(s)$ is strictly decreasing for $s\geq1$. This implies that $ABS(T')-ABS(T^{\ast})< h_1(d_v)\leq h_1(4)<0$, a contradiction.

Case 2. $u\not\in N(v)$. Let $w\in N(v)$ and $z\in N(u)$ such that $w$ and $z$ lie on the $u-v$ path. By Case 1, me may suppose that $d_z\geq 3$. Now we divide this case to four subcases.

Subcase 2.1. $d_v \geq 5$.  Let $y\in N(v)\setminus \{w\}$ and take $T'=T^{\ast}-\{yv\}+\{yu\}$. Then $T'\in \Gamma_{n,p}$. Moreover, from Lemma \ref{p1}, it follows that
\begin{align} ABS(T')-ABS(T^{\ast})= &f(d_y,3)-f(d_y,d_v)+f(2,3)-f(2,2)\nonumber\\
&+f(d_z,3)-f(d_z,2)\nonumber\\
&+\sum_{x\in N(v)\setminus\{y\}}(f(d_x,d_v-1)
-f(d_x,d_v))\nonumber\\
\leq &2f(3,3)-f(3,d_v)-f(2,2)-(d_v-2)(f(3,d_v)\nonumber\\
&-f(3,d_v-1))+f(d_w,d_v-1)-f(d_w,d_v).\nonumber
\end{align}
The function $h_2(s)=2f(3,3)-f(3,s)-f(2,2)-(s-2)(f(3,s)-f(3,s-1))$ is strictly decreasing for $s\geq 1$, and so $ABS(T')-ABS(T^{\ast})< h_2(5)<0,$ a contradiction.


Subcase 2.2. $d_v=4$ and $d_w\leq 3$. Let $y\in N(v)\setminus \{w\}$ and take $T'=T^{\ast}-\{yv\}+\{yu\}$. Then $T'\in \Gamma_{n,p}$. Moreover,
\begin{align}
ABS(T')-ABS(T^{\ast})&=f(d_y,3)-f(d_y,4)+f(2,3)-f(2,2)\nonumber\\
&+f(d_z,3)-f(d_z,2)+\sum_{x\in N(v)\setminus\{y\}}(f(d_x,3)-f(d_x,4))\nonumber\\
&\leq 5f(3,3)-4f(3,4)-f(2,2)<0,
\end{align}
 a contradiction.

Subcase 2.3: $d_v=4$ and $d_w\geq 6$. Note that $d_x\leq 4$ for each $x\in N_w\setminus \{w_1,v\}$,  because otherwise we reach a contradiction as in Subcase 2.1, where $w_1$ is the unique neighbor of $w$ in the $u-v$ path. Let $T'=T^{\ast}-\{vw\}+\{vu\}$. Then
\begin{align} ABS(T')-ABS(T^{\ast})= &f(4,3)-f(4,d_w)
+f(2,3)-f(2,2)+f(d_z,3)-f(d_z,2)\nonumber\\
&+\sum_{x\in N(w)\setminus\{v\}}(f(d_x,d_w-1)-f(d_x,d_w))\nonumber\\
<&f(4,3)-f(4,d_w)+f(3,3)-f(2,2)\nonumber\\
&-(d_w-1)(f(4,d_w)-f(4,d_w-1)).
\end{align}

The function $$h_4(s)=f(4,3)-f(4,s)+f(3,3)-f(2,2)-(s-1)(f(4,s)-f(4,s-1))$$ is strictly decreasing for $s\geq1$. Thus $ABS(T')-ABS(T^{\ast})< h_4(6)<0$, a contradiction.

Subcase 2.4: $d_v=4$ and $4\leq d_w\leq 5$. Let $w_1$ be the unique neighbor of $w$ in the $u-v$ path, and let $b_1,...,b_k$ be all vertices in $N(w)\setminus\{w_1\}$ of degree $4$ (with $b_1=v$). Then by Lemma \ref{p4} \ref{p4ii}, there are $k+1$ distinct vertices $x_i\in W_3(T^{\ast})$ such that $d_{x_i}=2$. The vertex $x_{k+1}$ exists because $d_w\geq4$. By Remark \ref{r1}, we may assume that these vertices form a path $u=x_1-x_2-...-x_k-x_{k+1}$.
 For each $i=2,...,k$ select $c_i\in N(b_i)\setminus\{w\}$ and let $T'=T^{\ast}-(\{c_ib_i,i=2,...,k\}\cup\{vw\})+(\{c_ix_i,i=2,...,k\}\cup\{vu\})$. Note that since $v$ is closer to $u$ than $c_i$, we have $d_{c_i}\leq 3$. Thus
\begin{align} ABS(T')-ABS(T^{\ast})=&
f(2,3)-f(2,2)+f(d_z,3)-f(d_z,2)\nonumber\\
&+f(4,3)-f(4,d_w)+(k-1)(f(3,3)-f(2,2))\nonumber\\
&+(k-1)(f(4,d_w)-f(3,d_w-1))\nonumber\\
&+\sum_{i=2}^{k}(f(d_{c_i},3)-f(d_{c_i},4))\nonumber\\
&+\sum_{i=2}^{k}\sum_{x\in N(b_i)\setminus\{w,c_i\}}(f(d_x,3)-
f(d_x,4))\nonumber\\
&+\sum_{e\in N(w)\setminus\{b_2,...,b_k,v\}}(f(d_e,d_w-1))-
f(d_e,d_w))\nonumber\\
\leq &(4k-2)f(3,3)-3(k-1)f(3,4))-kf(2,2)-f(4,d_w)\nonumber\\
&-(k-1)(f(4,d_w)-f(3,d_w-1))\nonumber\\
&-(d_w-k)(f(3,d_w)-f(3,d_w-1)).\nonumber
\end{align}
Calculations show that the right hand side of this inequality is negative when $d_w\in\{4,5\}$ and $k\in\{1,...,d_w\}$. This also yields a contradiction. Therefore the maximum degree of $T^{\ast}$ is $3$ as desired.
\end{proof}

Now we are ready to characterize the trees that minimize $ABS$ index in $\Gamma_{n,p}$ where $p\geq3$ and $n\geq3p-2$. For $p\geq3$ and $n=3p-2+t$ with $t\ge0$, let $\Gamma_{n,p}^* \subset \Gamma_{n,p}$ such that an arbitrary tree $T^{\ast}$ belonging to the class $\Gamma_{n,p}^*$ is defined as follows.\\
(1) Let $T_0$ be a tree of order $p-2$ and maximum degree $\Delta=min\{p-3,3\}$.\\
(2) Construct a tree $T_1$ from $T_0$ by attaching $3-i$ pendent path(s) of length two at each vertex of degree $i$ for $i=1,2,3$. (Note that the order of $T_1$ is $5n_1(T_0)+3n_2(T_0)+n_3(T_0)$, which is equal to $3p-2$ because $n_1(T_0)+n_2(T_0)+n_3(T_0)=p-2$ and $n_1(T_0)+2n_2(T_0)+3n_3(T_0)=2(p-3)$, where $n_k(T_0)$ denotes the number of vertices of degree $k$ in $T_0$.)\\
(3) Finally $T^{\ast}$ is obtained from $T_1$ by inserting $t$ vertices of degree two into one or more pendent paths.\\

Figure \ref{Fig-3} gives a tree of the class $\Gamma_{n,p}^*$.

\begin{figure}[!ht]
\centering
  \includegraphics[width=0.6\textwidth]{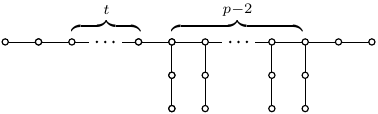}
   \caption{A tree attaining the minimum value of the ABS index in the class $\Gamma_{n,p}$ where $p\geq3$ and $n\ge3p-2$.}
    \label{Fig-3}
     \end{figure}

\begin{theorem}
Let $p\geq3$ and $n=3p-2+t$ where $t\geq0$. If $T\in \Gamma_{n,p}$ then $$ABS(T)\geq \left(\frac{2}{\sqrt{6}}+\sqrt{\frac{3}{5}}+\frac{1}{\sqrt{3}}\right)p+\frac{\sqrt{2}}{2}t+\frac{4}{\sqrt{6}},$$
with equality if and only if $T\in \Gamma_{n,p}^*$.
\end{theorem}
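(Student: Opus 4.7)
The plan is to combine the structural results already established in this section—Lemma \ref{p2} (no internal path of length greater than one), Lemma \ref{p4} (degree two on $W_2$ and a degree-sum identity on $W_3$), and Theorem \ref{txd} (maximum degree three)—to pin down the shape of any minimizer $T^{\ast}\in\Gamma_{n,p}$, and then to evaluate $ABS(T^{\ast})$ by counting edges according to their endpoint-degree pair.

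First I would fix the degree sequence of a minimizer $T^{\ast}$. By Lemma \ref{p4} every $v\in W_2(T^{\ast})$ has degree two, and the observation that two pendent neighbors at a single vertex would force $p\le 2$ yields $|W_2(T^{\ast})|=p$; hence $|W_3(T^{\ast})|=n-2p=p-2+t$. By Theorem \ref{txd} the degrees in $W_3(T^{\ast})$ lie in $\{2,3\}$, and the identity $\sum_{u\in W_3(T^{\ast})}d_u=3p-6+2t$ then forces exactly $p-2$ vertices of degree three and exactly $t$ vertices of degree two in $W_3(T^{\ast})$.

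Next I would invoke Lemma \ref{p2} to show that the subgraph $T_0$ induced in $T^{\ast}$ by the degree-three vertices is a tree of order $p-2$ with maximum degree three. Indeed, if two degree-three vertices were joined in $T^{\ast}$ only through a path containing a degree-two vertex, that path would be an internal path of length at least two, contradicting Lemma \ref{p2}; hence every two degree-three vertices on a common path in $T^{\ast}$ either have another degree-three vertex strictly between them or are directly adjacent, which forces $T_0$ to be connected. A degree count $\sum_{v\in V(T_0)}(3-\deg_{T_0}(v))=3(p-2)-2(p-3)=p$ then shows that exactly $p$ pendent paths emanate from $T_0$, each of length at least two and together absorbing the $t$ remaining degree-two vertices of $W_3(T^{\ast})$. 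This is precisely the defining construction of $\Gamma_{n,p}^{*}$.

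Finally, having identified that every minimizer lies in $\Gamma_{n,p}^{*}$, I would count the edges of $T^{\ast}$ by endpoint-degree pair: $p$ edges of type $(1,2)$ (pendent edges), $t$ edges of type $(2,2)$ (produced by step (3)), $p$ edges of type $(2,3)$ (one per pendent path, where it meets $T_0$), and $p-3$ edges of type $(3,3)$ (the edges of $T_0$). Summing the corresponding ABS contributions gives the stated right-hand side; since the total depends only on $p$ and $t$, every tree of $\Gamma_{n,p}^{*}$ attains it, and equality characterizes exactly this class. The main obstacle will be the second paragraph above: converting Lemma \ref{p2} into the statement that the degree-three vertices of $T^{\ast}$ already span a connected induced subtree, and then verifying that the resulting description (an arbitrary tree $T_0$ of order $p-2$ with max degree three, together with $p$ attached pendent paths subdivided by the $t$ extra degree-two vertices) matches the definition of $\Gamma_{n,p}^{*}$ without any further restriction on the choice of $T_0$.
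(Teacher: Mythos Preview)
Your proposal is correct and follows essentially the same route as the paper: apply Lemma~\ref{p4} and Theorem~\ref{txd} to fix the degree sequence of a minimizer (exactly $p-2$ vertices of degree three and $t$ internal vertices of degree two), use Lemma~\ref{p2} to conclude that the degree-three vertices induce a subtree of order $p-2$, and then count edges by endpoint-degree type to evaluate $ABS$. Your treatment of the equality case (showing every minimizer lies in $\Gamma_{n,p}^{*}$ and conversely that every member of $\Gamma_{n,p}^{*}$ realizes the bound) is in fact more explicit than the paper's, which leaves the converse implicit.
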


\begin{proof}
Let $T^{\ast}\in \Gamma_{n,p}$ having the minimum $ABS$ value. Then by Theorem \ref{txd}, the maximum degree in $T$ is three. For $i,j=1,2,3$ let $n_{i,j}$ be the number of edges in $T^{\ast}$ that joints vertices of degrees $i$ and $j$. Clearly, $n_{1,1}=0$. From Lemma \ref{p4} \ref{p4i}, we get $n_{1,2}=p$ and $n_{1,3}=0$. Let $d$ be the number of vertices of degree $3$. Then by Lemma \ref{p4} \ref{p4ii}, we get $2(p-2+t-d)+3d=3p-6+2t$. Thus $d=p-2$.  Since $T$ has no internal paths of length more than one, then the induced subgraph of $T$ over the set of vertices of degree three is a tree on $p-2$ vertices. Thus $n_{3,3}=p-3$. There are $3(p-2)$ edges incident with the vertices of degree $3$. Each one of these edges is either joining two vertices of degree $3$ or a vertex of degree $3$ and a vertex of degree $2$. Thus, we get $n_{2,3}+2n_{3,3}=3(p-2)$, and hence  $n_{2,2}=3(p-2)-2(p-3)=p$. By subtracting the values $n_{1,2}=p$, $n_{2,3}=p$, $n_{3,3}=p-3$ from $n-1=3p-3+t$, we get $n_{2,2}=t$. Thus
\begin{align}
ABS(T^{\ast})&=pf(1,2)+tf(2,2)+pf(2,3)+(p-2)f(3,3)\nonumber\\
             &=\left(\frac{2}{\sqrt{6}}+\sqrt{\frac{3}{5}}+\frac{1}{\sqrt{3}}\right)p+\frac{\sqrt{2}}{2}t+\frac{4}{\sqrt{6}}.\nonumber
\end{align}
\end{proof}

Here we remark that the statements of Lemma 3.11 and Corollary 3.12 in paper \cite{X} concerning chemical trees are not complete; these statements should include the condition $n\geq 3p-2$. Indeed, Fig. \ref{Graphs}  gives the example of (general and chemical) trees with maximum degree $\Delta\geq4$ attaining minimum ABS value in the class $\Gamma_{n,p}$ with $n\leq 3p-3$. Thus, the condition $n\geq 3p-2$ in Theorem \ref{txd} and in the aforementioned two results of \cite{X} is necessary. The problem of characterizing trees attaining the minimum values of the ABS index in $\Gamma_{n,p}$ under the conditions $p\geq 3$ and $p+1\leq n\leq 3p-3$ is still open (the maximal version of this problem was recently solved in \cite{Noureen-23}). In the remainder of this section, we address this problem. Note that $\Gamma_{p+1,p}$ consists of only one tree, namely the star tree. The next theorem deals with the case $n=p+2$.

\begin{theorem}
Let $p\geq 3$. Then the minimum $ABS$ index in $\Gamma_{p,p+2}$ is attained uniquely by the balanced double star.
\end{theorem}

\begin{proof} Let $T^{\ast}\in \Gamma_{p,p+2}$ attaining minimum $ABS$ index. Assume that $u,v\in V(T^{\ast})$ are the non-pendent vertices with $d_u\geq d_v+2$. Let $w\in N(u)\setminus\{v\}$ and let $T'=T-\{uw\}+\{vw\}$. Then

\begin{align*} ABS(T')-ABS(T^{\ast})=&
(f(1,d_v+1)-f(1,d_v))d_v-(d_u-1)(f(1,d_u)-f(1,d_u-1))\nonumber\\
&+f(1,d_v+1)-f(1,d_u)\nonumber\\
&=g(d_v)-g(d_u-1);
\end{align*}
where $g(x)=x(f(1,x+1)-f(1,x))+f(1,x+1)$. The derivative of $g(x)$ is given by
$$g'(x)=\frac{A-B}{(x+1)^2(x+2)^2\sqrt{x(x-1)}},$$
where $A=(x^2+3x+1)(x + 1)^2\sqrt{(x - 1)(x + 2)}$
and $B=(x^2+x-1)(x+2)^2\sqrt{x(x + 1)}$.
Since $A^2-B^2>0$ for $x\geq 1$ we get that $g(x)$ is decreasing. So $ABS(T')-ABS(T^{\ast})<0$, a contradiction.
\end{proof}

We end this section by giving all the graphs attaining minimum ABS index in the class $\Gamma_{n,p}$ for $p=4,5,6$ and $p+3\leq n \leq 3p-3$, see Figure \ref{Graphs}. These graphs are obtained by utilizing a computer software.
Observe that for every $(n,p)\not\in\{(8,4), (9,5),(13,6)\}$ with $p=4,5,6$ and $p+3\leq n \leq 3p-3$,
there is the unique graph attaining minimum ABS index in the class $\Gamma_{n,p}$; however, for every $(n,p)\in\{(8,4), (9,5),(13,6)\}$, there exist exactly two such extremal graphs.
From the trees depicted in Figure \ref{Graphs}, one may expect some certain structural properties of a tree attaining minimum ABS index in the class $\Gamma_{n,p}$ when $p\ge7$ and $p+3\leq n \leq 3p-3$. But, these trees seem to be insufficient for making some sound conjectures.

\begin{figure}[htp]
 \centering
  \includegraphics[width=0.9\textwidth]{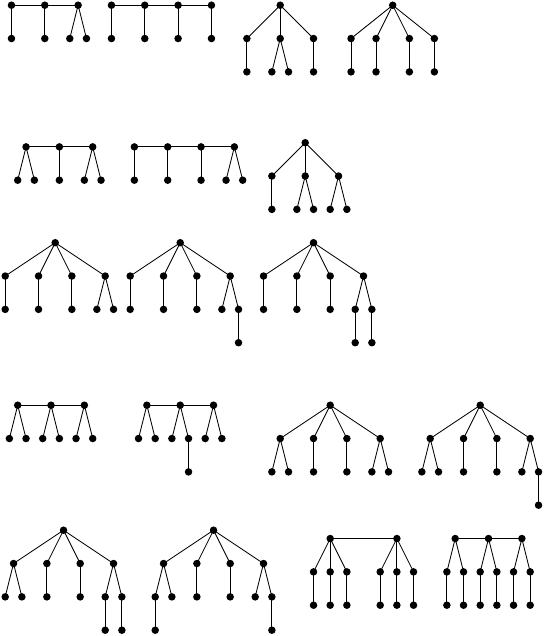}\\
   \caption{Graphs attaining minimum ABS value in the class $\Gamma_{n,p}$ for $p=4,5,6$ and $p+3\leq n \leq 3p-3$.}
    \label{Graphs}
     \end{figure}
\section*{Use of AI tools declaration}
The authors declare they have not used Artificial Intelligence (AI) tools in the creation of this article.

\section*{Acknowledgments}
This work is supported by Scientific Research Deanship, University of Ha\!'il, Saudi Arabia, through project number RG-23\,093.

\section*{Conflict of interest}
The authors declare that they do not have any conflict of interest.


\begin{thebibliography}{999}
\bibitem{Gross-05} J. L. Gross, J. Yellen, {\em Graph Theory and Its Applications}, Second Edition, CRC Press, 2005.

\bibitem{Wagner-18} S. Wagner, H. Wang, {\em Introduction to Chemical Graph Theory}, CRC Press, Boca Raton, 2018.

\bibitem{Randic-75} M. Randi\'c, {On characterization of molecular branching},
         \newblock  {\it J. Am. Chem. Soc.}, \newblock  \textbf{97} (1975), 6609--6615. \url{https://doi.org/10.1021/ja00856a001}

\bibitem{Gutman-13} I. Gutman, {Degree-based topological indices}, \newblock  {\it Croat. Chem. Acta.}, \newblock  \textbf{86} (2013), 351--361.
\url{http://dx.doi.org/10.5562/cca2294}

\bibitem{Li-MATCH-08} X. Li, Y. Shi, {A survey on the Randi\'c index},
        \newblock  {\it MATCH Commun. Math. Comput. Chem.}, \newblock  \textbf{59} (2008), 127--156. \url{https://match.pmf.kg.ac.rs/electronic_versions/Match59/n1/match59n1_127-156.pdf}

\bibitem{Randic-01} M. Randi\'c, {The connectivity index 25 years after},
        \newblock  {\it J. Mol. Graph. Model.}, \newblock  \textbf{20} (2001), 19--35. \url{https://doi.org/10.1016/S1093-3263(01)00098-5}

\bibitem{new1} I. Gutman, B. Furtula (Eds.), {\em Recent Results in the Theory of Randi\'c Index}, Math. Chem. Monogr. 6, Univ. Kragujevac, Kragujevac, 2008.

\bibitem{new3} X. Li, I. Gutman, {\em Mathematical Aspects of Randi\'c-Type Molecular Structure Descriptors}, Univ. Kragujevac, Kragujevac, 2006.

\bibitem{Estrada-98} E. Estrada, L. Torres, L. Rodr\'{\i}guez, I. Gutman,
        {An atom-bond connectivity index: modelling the enthalpy of formation of alkanes},  \newblock  {\it Indian J. Chem. Sec. A}, \newblock  \textbf{37} (1998), 849--855.

\bibitem{Estrada-08} E. Estrada, {Atom-bond connectivity and the energetic of
        branched alkanes}, \newblock  {\it Chem. Phys. Lett.}, \newblock  \textbf{463} (2008), 422--425. \url{https://doi.org/10.1016/j.cplett.2008.08.074}

\bibitem{Zhou-JMC-09} B. Zhou, N. Trinajsti\'c, {On a novel connectivity index},
        \newblock  {\it J. Math. Chem.}, \newblock  \textbf{46} (2009), 1252--1270. \url{https://doi.org/10.1007/s10910-008-9515-z}

\bibitem{Ali-DML-21} A. Ali, K. C. Das, D. Dimitrov, {B. Furtula, Atom-bond connectivity index of graphs: a review over extremal results and bounds},
        \newblock  {\it Discrete Math. Lett.}, \newblock  \textbf{5} (2021), 68--93. \url{http://dx.doi.org/10.47443/dml.2020.0069}

\bibitem{Ali-MATCH-19} A. Ali, L. Zhong, I. Gutman, {Harmonic index and its
        generalization: extremal results and bounds}, \newblock  {\it MATCH Commun. Math. Comput. Chem.}, \newblock  \textbf{81} (2019), 249--311. \url{https://match.pmf.kg.ac.rs/electronic_versions/Match81/n2/match81n2_249-311.pdf}

\bibitem{ABS-JMC} A. Ali, B. Furtula, I. Red\v{z}epovi\'c, I. Gutman,
        {Atom-bond sum-connectivity index}, \newblock  {\it J. Math. Chem.}, \newblock  \textbf{60} (2022), 2081--2093. \url{https://doi.org/10.1007/s10910-022-01403-1}

\bibitem{Tang-DAM-16} Y. Tang, D. B. West, B. Zhou, {Extremal problems for
        degree-based topological indices},  \newblock  {\it Discrete Appl. Math.}, \newblock  \textbf{203} (2016), 134--143. \url{https://doi.org/10.1016/j.dam.2015.09.011}

\bibitem{ABS-EJM} A. Ali, I. Gutman, I. Red\v{z}epovi\'c, {Atom-bond sum-connectivity index of unicyclic graphs and some applications}, \newblock  {\it Electron. J. Math.}, \newblock  \textbf{5} (2023), 1--7. \url{https://doi.org/10.47443/ejm.2022.039}

 \bibitem{Bondy08} J. A. Bondy, U. S. R. Murty, {\em Graph Theory}, Springer, 2008.


\bibitem{X} J. Du, X. Sun, {On bond incident degree index of chemical trees with a fixed order and a fixed number of leaves}, \newblock  {\it Appl. Math. Comp.}, \newblock  \textbf{464} (2024), 128390. \url{https://doi.org/10.1016/j.amc.2023.128390}

\bibitem{Noureen-23} S. Noureen, A. Ali, Maximum atom-bond sum-connectivity index of
n-order trees with fixed number of leaves, Discrete Math. Lett. {\bf12} (2023), 26--28.
\url{https://doi.org/10.47443/dml.2023.016}



\end{thebibliography}
\end{document}